\newtheorem{theorem}[equation]{Theorem}
\newtheorem{proposition}[equation]{Proposition}
\newtheorem{corollary}[equation]{Corollary}
\theoremstyle{remark}
\newtheorem{remark}[equation]{Remark}
\theoremstyle{definition}
\numberwithin{equation}{section}
\def\@secnumfont{\bfseries}
\newcommand{\re}{\mathop{\rm\ Re}}
\newcommand{\lbd}{\lambda}
\renewcommand{\phi}{\varphi}
\newcommand{\bld}[1]{\boldsymbol{#1}}
\newcommand{\N}{\bld{N}}
\newcommand{\ignore}[1]{{}}
\newcommand{\A}{\boldsymbol{A}}
\newcommand{\D}{\mathcal{D}}
\title[Semigroups of measures]{Convergence of semigroups of measures\\ on a Lie group}
\author{Pawe{\l} G{\l}owacki}
\subjclass[2000]{46N40 (primary), 60B10, 60B15 (secondary)}
\keywords{Semigroups of measures, dissipative distributions, Hunt theory, Lie groups, unitary representations}
\begin{document}
\thispagestyle{empty}
\begin{abstract}
 A theorem of Siebert asserts that if $\mu_n(t)$ are semigroups of probability measures on a Lie group $G$, and $P_n$ are the corresponding generating functionals, then
\[
\langle\mu_n(t),f\rangle\ \xrightarrow[n]{}\ \langle\mu_0(t),f\rangle,
\qquad
f\in C_b(G),\ t>0, 
\]
implies
\[
 \langle\pi_{P_n}u,v\rangle\ \xrightarrow[n]{}\ \langle\pi_{P_0}u,v\rangle,
\qquad
u\in C^{\infty}(\pi), v\in X,
\]
for every unitary representation $\pi$ of $G$ on a Hilbert space $X$, where $C^{\infty}(\pi,X)$ denotes the space of smooth vectors for $\pi$.

The aim of this note is to give a simple proof of the theorem and propose some improvements. In particular, we completely avoid employing unitary representations by showing simply that under the same hypothesis  
\[
 \langle P_n,f\rangle\ \xrightarrow[n]{}\ \langle P_0,f\rangle,
\qquad
f\in C_b^2(G).
\]
As a corollary, the above thesis of Siebert is extended to strongly continuous  representations of $G$ on Banach spaces.
\end{abstract}
\maketitle
\section{Introduction}

Let $X$ be a Banach space and
\[
 \A_n:\D_n\to X,
\qquad
\D_n={\rm dom}(\A_n)\subset X,
\]
inftinitesimal generators of strongly continuous contraction semigroups $e^{t\A_n}$ on $X$. It is classical that 
\begin{equation}\label{se}
 e^{t\A_n}f\xrightarrow[n]{} e^{t\A_0}f,
\qquad
f\in X, \ t>0,
\end{equation}
is equivalent to
\begin{equation}\label{re}
 (\lbd I-\A_n)^{-1}f\xrightarrow[n]{}(\lbd I-\A_0)^{-1}f,
\qquad
f\in X, \ \re\lbd<0.
\end{equation}
(See, e.g. Yosida \cite{yosida}, IX.12.) Furthermore, if there exists a common core domain $\D\subset\D_n$, then
\begin{equation}\label{ge}
 \A_nf\xrightarrow[n]{} \A_0f,
\qquad
f\in\D,
\end{equation}
implies (\ref{se}). See Kato \cite{kato}, Theorem VIII.1.5. 

A remarkable property of semigroups of probability measures is that  this implication can be in a way reversed. Namely, if $\mu_n(t)$ is a sequence of such semigroups on a Lie group $G$, and $P_n$ are the corresponding generating functionals, then
\begin{equation}\label{meas}
\langle\mu_n(t),f\rangle\ \xrightarrow[n]{}\ \langle\mu_0(t),f\rangle,
\qquad
f\in C_b(G),\ t>0, 
\end{equation}
implies
\[
 \langle\pi_{P_n}u,v\rangle\ \xrightarrow[n]{}\ \langle\pi_{P_0}u,v\rangle,
\qquad
u\in C^{\infty}(\pi), v\in X,
\]
for every unitary representation $\pi$ of $G$ on a Hilbert space $X$, where $C^{\infty}(\pi,X)$ denotes the space of smooth vectors for $\pi$, see Siebert\cite{siebert}, Proposition 6.4. A generalisation can be found in Hazod \cite{hazod}.

The aim of this note is to give a simple proof of the theorem and propose some improvements.
Thus, it is  shown that (\ref{meas}) implies
\[
 \langle P_n,f\rangle\ \xrightarrow[n]{}\ \langle P_0,f\rangle,
\qquad
f\in C_b^2(G).
\] 
The main idea is that the norm of a dissipative distribution interpreted as a functional on $C_b^2(G)$ can be controlled by its action on the coordinate functions. This helps to eliminate any reference to unitary representatations so prominent in Siebert \cite{siebert}. The result of Siebert is recovered as a corollary and extended to  strongly continuous representations on Banach spaces such that
\begin{equation}\label{int}
 \int_{G\setminus U}\|\pi(x)\|P(dx)<\infty,
\end{equation}
for an open neighbourhood $U$ of the identity. 
 
The striking simplicity of our proof as compared to that of Siebert \cite{siebert} is our main argument for the presentation. However, an essential step in the proof is based on an idea borrowed from Siebert which reduces the task to the case where the norms $\|P_n\|$ as functionals on $C_b^2(G)$ are uniformly bounded in $n$ (cf. Siebert \cite{siebert}, Proposition 6.3).

As an introduction to the theory of probability semigroups of measures on Lie groups we recommend Duflo \cite{duflo} and Hulanicki \cite{hulanicki}. The reader may also wish to consult Hazod-Siebert \cite{hazod-siebert}.

\section{Dissipative distributions}
Let $G$ be a Lie group and $\{X_j\}_{j=1}^d$ a basis of leftinvariant vector fields $G$. Let $C_b(G)$ denote the Banach space of bounded continuous functions with the sup norm $\|\cdot\|_{\infty}$. Let
$$
C^{2}_b(G)=\{f\in C^{2}(G): \|f\|_{C^2}<\infty\},
$$
where
$$
\|f\|_{C^2}=\max_{|\alpha|\le 2}\|X^{\alpha}f\|_{\infty}.
$$

A Schwartz distribution $P$ is said to be \textit{dissipative} if
\[
 \langle P,f\rangle\le0
\]
for real $f\in C_c^{\infty}(G)$ which attain maximal value at the identity $e$. Moreover, $P$ extends to a continuous linear functional on $C_b^2(G)$. If $f\in C^2_b(G)$ and 
\[
 f(e)=X_jf(e)=X_jX_kf(e)=0,
\qquad
1\le j,k\le d,
\]
then
\begin{equation}\label{levy}
\langle P,f\rangle=\int_{G}f(x)P(dx).
\end{equation}
 In particular, $P$ coincides with a positive Radon measure on $G\setminus\{e\}$ which is bounded outside any open neighbourhood of $e$.
It is convenient to introduce coordinate functions near the identity. Let $\Phi_j\in C_c^{\infty}(G)$ be real functions such that
\[
X_j\Phi_k(e)=\delta_{jk},
\qquad
1\le k,j\le d,
\]
and let
\[
 \Phi_{jk}=\Phi_j\Phi_k,
\qquad
1\le k,j\le d.
\]
Let also $\Phi\in C_c^{\infty}(G)$ be a $[0,1]$-valued function equal to $1-\sum_{k=1}^d\Phi_j^2$ in a neighbourhood of $e$. Then we have the following Taylor estimate
\begin{equation}\label{taylor}
\bigg|f(x)-f(e)-\sum_{j=1}^dX_jf(e)\Phi_j(x)\bigg|\le C\|f\|_{C_b^2(G)}(1-\Phi(x))
\end{equation}
for $f\in C_b^2(G)$. The constant $C$ here and throughout the paper is a generic constant which may vary from statement to statement. 

Since  $\langle P,1-\Phi\rangle\ge0$ and
\[
\langle P,\Phi\rangle+\langle P,1-\Phi\rangle=\langle P,1\rangle\le0,
\]
we have, by (\ref{levy}),
\begin{equation}\label{levy1}
\int1-\Phi(x)P(dx)=\langle P,1-\Phi\rangle\le|\langle P,\Phi\rangle|.
\end{equation}

\begin{remark}
The function $1-\Phi$ is often called a Hunt function. 
\end{remark}

\begin{proposition}\label{norm}
Let $P$ be dissipative. Then, there exists a constant $C>0$ such that
\[
 \|P\|\le C\bigg(|\langle P,\Phi\rangle|+\sum_{j,k=1}^d|\langle P,\Phi_{jk}\rangle|+\sum_{j=1}^d|\langle P,\Phi_j\rangle|+|\langle P,1\rangle|\bigg),
\]
where $\|P\|$ is the norm of $P$ as a linear functional on $C^2_b(G)$.
\end{proposition}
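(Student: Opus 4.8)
The plan is to estimate $|\langle P,f\rangle|$ for an arbitrary $f\in C_b^2(G)$ and then take the supremum over the unit ball $\|f\|_{C^2}\le 1$. I would split $f=h+g$, where $h$ is a second-order Taylor polynomial of $f$ at $e$ expressed through the coordinate functions, and $g=f-h$ is a remainder vanishing to second order at $e$. The point of the splitting is that $\langle P,h\rangle$ is, by linearity, a combination of the four quantities on the right-hand side, while $\langle P,g\rangle$ can be turned into an honest integral by the Lévy formula (\ref{levy}) and then dominated by the Hunt function $1-\Phi$ via (\ref{levy1}).

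Concretely, I would set
\[
 h = f(e)\,1 + \sum_{j=1}^d X_jf(e)\,\Phi_j + \sum_{j,k=1}^d c_{jk}\Phi_{jk},
\]
and choose the symmetric coefficients $c_{jk}$ so that $g=f-h$ satisfies $g(e)=X_jg(e)=X_jX_kg(e)=0$. Matching values and first derivatives at $e$ is immediate from $\Phi_j(e)=0$, $X_i\Phi_j(e)=\delta_{ij}$ and $\Phi_{jk}(e)=X_i\Phi_{jk}(e)=0$; this fixes the first two groups of coefficients and gives $|f(e)|,|X_jf(e)|\le\|f\|_{C^2}$. For the second derivatives one computes $X_iX_l\Phi_{jk}(e)=\delta_{lj}\delta_{ik}+\delta_{ij}\delta_{lk}$, so the $\Phi_{jk}$-terms contribute $c_{li}+c_{il}=2c_{il}$ to $X_iX_lh(e)$, a quantity necessarily symmetric in $(i,l)$ because $\Phi_{jk}=\Phi_{kj}$. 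The subtle — and to my mind principal — point is whether the second-order data of $f$ can be matched at all, since $X_iX_lf(e)$ is \emph{not} symmetric: the fields do not commute. Here the commutation relations save the day. The antisymmetric part of $X_iX_lf(e)$ equals $\tfrac12[X_i,X_l]f(e)$, a first-order quantity, and precisely this part is already reproduced by the first-order term $\sum_j X_jf(e)\,\Phi_j$, since $\tfrac12\sum_j X_jf(e)\,([X_i,X_l]\Phi_j)(e)=\tfrac12[X_i,X_l]f(e)$ by $X_m\Phi_j(e)=\delta_{mj}$. Thus only the symmetric part of the Hessian remains, and it is matched by solving $2c_{il}=(\text{symmetric data})$ for the symmetric matrix $c$; as the symmetric data is a fixed linear expression in the $X^\alpha f(e)$ with $|\alpha|\le2$, we get $|c_{jk}|\le C\|f\|_{C^2}$.

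With $h$ so chosen, linearity gives $|\langle P,h\rangle|\le C\|f\|_{C^2}\big(|\langle P,1\rangle|+\sum_j|\langle P,\Phi_j\rangle|+\sum_{j,k}|\langle P,\Phi_{jk}\rangle|\big)$. For the remainder, $g\in C_b^2(G)$ vanishes to second order at $e$, so (\ref{levy}) yields $\langle P,g\rangle=\int_G g\,P(dx)$, and it remains to bound $g$ pointwise by the Hunt function. Writing $g=\big(f-f(e)-\sum_jX_jf(e)\Phi_j\big)-\sum_{j,k}c_{jk}\Phi_{jk}$, the bracket is $\le C\|f\|_{C^2}(1-\Phi)$ by the Taylor estimate (\ref{taylor}), while each $\Phi_{jk}=\Phi_j\Phi_k$ satisfies $|\Phi_j\Phi_k|\le\tfrac12\sum_m\Phi_m^2=\tfrac12(1-\Phi)$ near $e$ and is bounded on its compact support away from $e$, where $1-\Phi$ is bounded below; hence $|\Phi_{jk}|\le C(1-\Phi)$ and therefore $|g(x)|\le C\|f\|_{C^2}(1-\Phi(x))$. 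Integrating against the positive measure $P$ and invoking (\ref{levy1}) gives $|\langle P,g\rangle|\le C\|f\|_{C^2}\int(1-\Phi)\,P(dx)\le C\|f\|_{C^2}|\langle P,\Phi\rangle|$. Adding the two estimates and taking the supremum over $\|f\|_{C^2}\le1$ yields the assertion. The one genuine difficulty is the second-derivative matching discussed above; everything else is bookkeeping with (\ref{taylor}) and the mass bound (\ref{levy1}).
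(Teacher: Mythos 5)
Your proof is correct and follows essentially the same route as the paper: split $f$ into a second-order polynomial in $1,\Phi_j,\Phi_{jk}$ plus a remainder vanishing to second order at $e$, estimate the polynomial part by linearity, and control the remainder via (\ref{levy}), (\ref{taylor}) and (\ref{levy1}). The only difference is that you choose symmetric coefficients $c_{jk}$ so as to deal explicitly with the non-symmetry of $X_iX_lf(e)$, whereas the paper simply takes $c_{jk}=\tfrac12 X_jX_kf(e)$ and leaves this point (which tacitly requires $(X_iX_l+X_lX_i)\Phi_j(e)=0$, e.g.\ for canonical coordinates) implicit --- your version is the more careful one.
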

\begin{proof}
For every $f\in C_b^2(G)$ we can write 
\[
f(x)=f_1+f_2,
\qquad
f_1(x)=f(e)+\sum_{j=1}^dX_jf(e)\Phi_j(x)+\frac{1}{2}\sum_{j,k=1}^dX_jX_kf(e)\Phi_{jk},                        \]
and $f_2=f-f_1$. Then,
\[
 |\langle P,f_1\rangle|\le\bigg(\sum_{j,k=1}^d|\langle P,\Phi_{jk}\rangle
+\sum_{j=1}^d|\langle P,\Phi_j\rangle|+|\langle P,1\rangle|\bigg)\cdot\|f\|_{C_b^2(G)}
\]
and, by (\ref{levy}) and(\ref{taylor}), 
\begin{align*}
 |\langle P,f_2\rangle|&\le\left|\int f_2(x)P(dx)\right|\\
&\le\left|\int f_2(x)\Phi(x)P(dx)\right|
+\int|f_2(x)|\bigg(1-\Phi(x)\bigg)P(dx)\\
&\le C\int1-\Phi(x)P(dx)\cdot\|f\|_{C^2(G)}+\langle P,1-\Phi\rangle\cdot\|f\|_{C_b(G)},
\end{align*}
which, by (\ref{levy1}), completes the proof.
\end{proof}

\section{Convergence}

By the Hunt theory (cf. Hunt \cite{hunt} and Hulanicki \cite{hulanicki}), any dissipative $P$ is a generating functional for a weakly continuous semigroup $\mu(t)$ of subprobability measures. (The measures are probability measures if and only if $\langle P,1\rangle=0$.) Let $\pi$ be a strongly continuous representation of $G$ on a Banach space $X$ satisfying (\ref{int}). The operators 
\[
 \pi_{\mu(t)}u=\int \pi(x)\mu(t)(dx),
\qquad
u\in X,
\]
form a strongly continuous contraction semigroup, and
\begin{equation}\label{repr}
 \langle\pi_Pu,v\rangle=\langle P,f_{u,v}\rangle,
\qquad
u\in C^2(\pi,G),\ v\in X',
\end{equation}
is the infinitesimal generator of $\pi_{\mu(t)}$ with
\begin{equation}\label{coeff}
 C^2(\pi,X)=\{u\in X:f_{u,v}\in C^2_b(G)\},
\qquad
f_{u,v}(x)=\langle\pi(x)u,v\rangle,
\end{equation}
for its core domain. (Cf. Duflo \cite{duflo}, Section 11 and 12.)
In particular, the convolution operators
\begin{equation}\label{operators}
T(t)f(x)=f\star\widetilde{\mu}(t)=\int f(xy)\mu(t)(dy)
\end{equation}
form a  strongly continuous semigroup of contractions on  $C_b(G)$. The infinitesimal generator of $T(t)$ 
is the convolution operator
\[
\boldsymbol{P}f(x)=f\star\widetilde{P}(x)=\int f(xy)P(dy)
\]
for which $C_b^2(G)$ is a core domain.

\begin{remark}\label{unif}
Recall that if $T_n(t)$ are strongly continuous contraction semigroups on a Banach space $X$ and 
\[
T_n(t)u\xrightarrow[n]{} T_0(t)u,
\qquad
u\in X,\ t>0, 
\]
then for every fixed $u\in X$ the convergence is uniform in $0\le t\le1$. (See, e.g. Yosida \cite{yosida}, Theorem IX.12.1.)
\end{remark}

\begin{theorem}\label{goal}
Let $P_n$ be dissipative distributions on $G$. Denote by $\mu_n(t)$ the corresponding semigroups of  measures. If  
\[
 \langle\mu_n(t),f\rangle\ \xrightarrow[n]{}\ \langle\mu_0(t),f\rangle,
\qquad
f\in C_b(G),\ t>0,
\]
then, for every $f\in C_b^2(G)$,
\[
 \langle P_n,f\rangle\ \xrightarrow[n]{}\ \langle P_0,f\rangle.
\]
Moreover, if $P_0=0$, then $\|P_n\|\xrightarrow[n]{}0$.
\end{theorem}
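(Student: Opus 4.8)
The plan is to reduce everything to the situation where the norms $\|P_n\|$ as functionals on $C_b^2(G)$ are uniformly bounded, say $\|P_n\|\le M$ for all $n$; this reduction I take over from Siebert (cf. \cite{siebert}, Proposition 6.3) and regard as the one genuinely borrowed ingredient. Granting it, the whole engine of the proof is the interchange of the two limits in
\[
\langle P_n,g\rangle=\lim_{t\to0^+}\frac{\langle\mu_n(t),g\rangle-g(e)}{t},
\]
since for each fixed $t>0$ the hypothesis gives $\langle\mu_n(t),g\rangle\to\langle\mu_0(t),g\rangle$. Thus it suffices to prove one uniform-in-$n$ estimate: that the difference quotient above converges to $\langle P_n,g\rangle$ uniformly in $n$ as $t\to0^+$. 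Once this is available for a rich enough class of $g$, the interchange of limits yields $\langle P_n,g\rangle\to\langle P_0,g\rangle$, and the rest is bookkeeping through Proposition \ref{norm}.

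To prove the uniform estimate I would first record the identity, valid because $\boldsymbol{P}_n$ generates $T_n(t)$ and by Fubini,
\[
\frac{\langle\mu_n(t),g\rangle-g(e)}{t}-\langle P_n,g\rangle=\Big\langle P_n,\ \frac1t\int_0^t\big(\check{T}_n(s)g-g\big)\,ds\Big\rangle,
\qquad
\check{T}_n(s)g(y)=\int g(xy)\,\mu_n(s)(dx),
\]
so that the left side is bounded by $M\sup_{0<s\le t}\|\check{T}_n(s)g-g\|_{C_b^2(G)}$. Since the left-invariant fields commute with $\check{T}_n(s)$, one has $\|\check{T}_n(s)g-g\|_{C_b^2}=\max_{|\alpha|\le2}\|\check{T}_n(s)(X^\alpha g)-X^\alpha g\|_\infty$, and for each $\phi=X^\alpha g$,
\[
\check{T}_n(s)\phi(y)-\phi(y)=\int\big(\phi(xy)-\phi(y)\big)\mu_n(s)(dx)-\phi(y)\big(1-\mu_n(s)(G)\big).
\]
The mass defect is controlled by $1-\mu_n(s)(G)\le|\langle P_n,1\rangle|\,s\le Ms$; the contribution of $x$ far from $e$ is controlled by the tail bound $\int(1-\Phi)\,d\mu_n(s)\le Ms\|1-\Phi\|_{C_b^2}$, which follows by integrating $m_n(s)=\langle\mu_n(s),1-\Phi\rangle$ from $m_n(0)=0$ using $\|\boldsymbol{P}_n(1-\Phi)\|_\infty\le M\|1-\Phi\|_{C_b^2}$; and the contribution of $x$ near $e$ is handled by the uniform continuity of $\phi$. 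For $g\in C_c^2(G)$ the derivatives $X^\alpha g$ are uniformly continuous, so all three terms tend to $0$ uniformly in $n$, and I conclude $\langle P_n,g\rangle\to\langle P_0,g\rangle$ for every $g\in C_c^2(G)$.

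It remains to pass from $C_c^2(G)$ to all of $C_b^2(G)$ and to extract the two assertions. The constant function is treated directly: $\boldsymbol{P}_n1=\langle P_n,1\rangle$ is a scalar, whence $\mu_n(t)(G)=e^{t\langle P_n,1\rangle}$ and the hypothesis forces $\langle P_n,1\rangle\to\langle P_0,1\rangle$. For general $f\in C_b^2(G)$ I split $f=f_1+f_2$ as in the proof of Proposition \ref{norm}; the finite combination $f_1$ of $1,\Phi_j,\Phi_{jk}$ is covered by the previous paragraph, while $f_2$ vanishes to second order at $e$, so $\langle P_n,f_2\rangle=\int f_2\,dP_n$ by \eqref{levy}. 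Writing $f_2=f_2\chi+f_2(1-\chi)$ with $\chi\in C_c^\infty(G)$ equal to $1$ on a large compact set, the compactly supported piece $f_2\chi\in C_c^2(G)$ is again covered above, and the tail $\int f_2(1-\chi)\,dP_n$ is made uniformly small by the tightness of the L\'evy measures $P_n|_{G\setminus\{e\}}$; this tightness I would derive from the tightness of the weakly convergent family $\{\mu_n(t)\}_n$ together with a one-jump lower bound for $\mu_n(t)$ away from $e$. Finally, if $P_0=0$ then $\langle P_n,g\rangle\to0$ for each of the finitely many functions $\Phi,\Phi_{jk},\Phi_j,1$, and Proposition \ref{norm} gives $\|P_n\|\to0$.

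The main obstacle is precisely the uniform-in-$n$ control as $t\to0^+$: without the a priori bound $\|P_n\|\le M$ the difference quotients cannot be compared across $n$, which is why Siebert's reduction is indispensable; and the only delicate point beyond it is the uniform tightness at infinity of the L\'evy measures needed to dispose of the tail of $f_2$.
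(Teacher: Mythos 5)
Your overall architecture --- reduce to $\sup_n\|P_n\|<\infty$, then interchange the limits $n\to\infty$ and $t\to0^+$ --- is the same as the paper's, and your uniform difference-quotient estimate for $g\in C_c^2(G)$ (mass defect, tail via the Hunt function, local uniform continuity) is sound. But there are two genuine gaps. The more serious one is that you take the uniform bound $\|P_n\|\le M$ as an external citation. The paper borrows only the \emph{idea} from Siebert and proves the bound by a bootstrap inseparable from the rest of the argument: supposing $\alpha_k=\|P_{n_k}\|\to\infty$, rescale to $Q_k=\alpha_k^{-1}P_{n_k}$ with $\|Q_k\|=1$; the associated semigroups $\nu_k(t)=\mu_{n_k}(\alpha_k^{-1}t)$ converge to the trivial semigroup (by Remark \ref{unif} and $\alpha_k\to\infty$), whose generating functional is $Q_0=0$; the already-established bounded case of the theorem together with Proposition \ref{norm} then forces $\|Q_k\|\to0$, a contradiction. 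Note this uses precisely the ``Moreover'' clause ($P_0=0$ implies $\|P_n\|\to0$) that you relegate to the end as an afterthought. Without running this argument you have no right to the constant $M$; and if you insist on citing Siebert's Proposition 6.3 you must at least check that it controls the norm of $P_n$ as a functional on $C_b^2(G)$ rather than the representation-theoretic quantities Siebert works with --- eliminating those is the declared point of the paper.

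The second gap is the passage from $C_c^2(G)$ to $C_b^2(G)$. Because your uniform estimate needs uniform continuity of the derivatives $X^\alpha g$ up to order two, you must localize, and you then dispose of the tail $\int f_2(1-\chi)\,dP_n$ by uniform tightness of the L\'evy measures, which you only gesture at (``one-jump lower bound''). Making that precise --- a quantitative inequality $\mu_n(t)(V)\ge c\,t\,P_n(V)$ for $V$ away from $e$, uniform in $n$, plus Prokhorov tightness of $\{\mu_n(t)\}_n$ --- is real additional work. The paper avoids it entirely: it writes, for an arbitrary $f\in C_b^2(G)$, the identity expressing $t\langle P_0-P_n,f\rangle$ as $\langle\mu_0(t)-\mu_n(t),f\rangle$ plus the error terms $\int_0^t\langle P_n-P_0,T_0(s)f-f\rangle\,ds$ and $\int_0^t\langle P_n,T_n(s)f-T_0(s)f\rangle\,ds$; the first error is controlled by the uniform bound and the behaviour of the single fixed semigroup $T_0$ at the fixed $f$, the second by the uniform-in-$s$ convergence of Remark \ref{unif}. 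No localization or tightness of L\'evy measures is needed. I would adopt the paper's identity for the convergence step and, indispensably, write out the rescaling argument for the uniform bound.
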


\begin{proof}
Let us keep the notation established above. For $f\in C_b^2(G)$, we have
\begin{align*}
\langle&\mu_0(t)-\mu_n(t),f\rangle=\langle\mu_0(t)-\delta_0,f\rangle-\langle\mu_n(t)-\delta_0,f\rangle\\
&=\int_0^t\langle\mu_0(s)\star P_0,f\rangle\,ds-\int_0^t\langle\mu_n(s)\star P_n,f\rangle\,ds\\
&=\int_0^t\langle P_0,f\star\widetilde{\mu}_0(s)\rangle\,ds-\int_0^t\langle P_n,f\star\widetilde{\mu}_n(s)\rangle\,ds\\
&=\int_0^t\langle P_0,T_0(s)f-f\rangle\,ds-\int_0^t\langle P_nf,T_n(s)f-f\rangle\,ds
+t\langle P_0-P_n,f\rangle\\
&=\int_0^t\langle P_0-P_n,T_0(s)f-f\rangle\,ds\\
&+\int_0^t\langle P_n,T_0(s)f-T_n(s)f\rangle\,ds
+t\langle P_0-P_n,f\rangle,
\end{align*}
whence
\begin{align*}
\langle P_0-P_n,f\rangle&=\frac{1}{t}\langle\mu_0(t)-\mu_n(t),f\rangle
+\frac{1}{t}\int_0^t\langle P_n-P_0,T_0(s)f-f\rangle\,ds\\
&+\frac{1}{t}\int_0^t\langle P_n,T_n(s)f-T_0(s)f\rangle\,ds.
\end{align*}

Assume for the moment that 
\begin{equation}\label{bound}
\sup_n\|P_n\|<\infty. 
\end{equation}
Since, by Remark \ref{unif}, the convergence of the semigroups is uniform for $0\le s\le t\le1$, the last formula implies the desired convergence of generating functionals. In fact, pick $t>0$ small enough to make the second term small, then fix $t$ and take $n$ large enough to make the remaining terms small. If , in particular $P_0=0$, then, by Proposition \ref{norm}, 
\begin{align*} 
\|P_n\|\le& C\bigg(|\langle P_n,\Phi\rangle|+\sum_{j,k=1}^d|\langle P_n,\Phi_{jk}\rangle|\\
&+\sum_{j=1}^d|\langle P_n,\Phi_j\rangle|+|\langle P_n,1\rangle|\bigg)\xrightarrow[n]{}0, 
\end{align*}
 which proves the last statement of the theorem. Thus, it remains to show that (\ref{bound}) holds under the hypothesis of the theorem.

In fact, assume \textit{a contrario} that this is not true. Then there exists a sequence of integers $n_k$ such that $\alpha_k=\|P_{n_k}\|\to\infty$. The generating functionals  $Q_k=\alpha_k^{-1}P_{n_k}$ satisfy  
\begin{equation}\label{qs}
\|Q_k\|=1,
\qquad
k\in\N, 
\end{equation}
and the corresponding sequence of semigroups is
$$
\nu_k(t)=\mu_{n_k}(\alpha_k^{-1}t)=\bigg(\mu_{n_k}(\alpha_k^{-1}t)-\mu_{0}({\alpha_k^{-1}t})\bigg)+\mu_0(\alpha_k^{-1}t),
$$
which converges to the trivial semigroup, the generating functional being $Q_0=0$. The last statement follows from the fact that, by Remark \ref{unif}, 
\[
\langle\mu_{n_k}(t),f\rangle\ \xrightarrow[k]{}\ \langle\mu_0(t),f\rangle
\]
uniformly in $0\le t\le1$, and $\alpha_k\to\infty$. By the first part of the proof, $\|Q_k\|\to0$, which contradicts (\ref{qs}). This proves (\ref{bound}) and completes the proof of the theorem.
\end{proof}

\begin{corollary}
Let $\pi$ be a strongly continuous representation of $G$ on a Banach space $X$ satisfying (\ref{int}). If $\mu_n(t)$ and $P_n$ satisfy the hypothesis of Theorem \ref{goal}, then for every $u\in C^2(\pi,X)$ and every $v\in X'$,
\[
 \langle\pi_{P_n}u,v\rangle\ \xrightarrow[n]{}\ \langle\pi_{P_0}u,v\rangle.
\]
 \end{corollary}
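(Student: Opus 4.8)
The plan is to deduce the corollary directly from Theorem~\ref{goal} through the representation formula (\ref{repr}), which expresses the action of the generator $\pi_P$ on smooth vectors as the integration of a fixed coordinate function against $P$. The essential observation is that this coordinate function is attached to the representation $\pi$ and to the pair $(u,v)$, and is entirely independent of the generating functional.

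First I would fix $u\in C^2(\pi,X)$ and $v\in X'$ and consider the coefficient $f_{u,v}(x)=\langle\pi(x)u,v\rangle$. By the definition (\ref{coeff}) of the core domain $C^2(\pi,X)$ this function belongs to $C_b^2(G)$, and, being built from $\pi$ alone, it is the same test function for all the semigroups $\mu_n(t)$, whose generators share the representation $\pi$. Since $u$ lies in the core domain of each generator $\pi_{P_n}$, the identity (\ref{repr}) is available for every index $n\ge0$ and gives $\langle\pi_{P_n}u,v\rangle=\langle P_n,f_{u,v}\rangle$.

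It then remains to apply Theorem~\ref{goal} to the single test function $f=f_{u,v}\in C_b^2(G)$, which yields $\langle P_n,f_{u,v}\rangle\to\langle P_0,f_{u,v}\rangle$; reading (\ref{repr}) once more at $n=0$ identifies the limit as $\langle\pi_{P_0}u,v\rangle$, and the claimed convergence follows. There is no substantial obstacle beyond Theorem~\ref{goal} itself: the only point deserving attention is that $C^2(\pi,X)$ and the coefficient $f_{u,v}$ are intrinsic to $\pi$ and do not vary with $n$, so that one and the same function serves all the functionals $P_n$ at once. The integrability hypothesis (\ref{int}) is used only beforehand, to ensure that $\pi_{\mu_n(t)}$ is a strongly continuous contraction semigroup whose generator is given by (\ref{repr}).
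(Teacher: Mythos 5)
Your argument is correct and coincides with the paper's own proof: both fix $u$ and $v$, observe via (\ref{coeff}) that $f_{u,v}\in C_b^2(G)$, use (\ref{repr}) to write $\langle\pi_{P_n}u,v\rangle=\langle P_n,f_{u,v}\rangle$ for every $n\ge0$, and then apply Theorem \ref{goal} to this single test function. Nothing is missing.
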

\begin{proof}
If $u\in C^2(\pi,X)$, then, by (\ref{coeff}), $f_{u,v}$ is in $C_b^2(G)$ so, by Theorem \ref{goal} and (\ref{repr}),
\[
\langle\pi_{P_n}u,v\rangle=\langle P_n,f_{u,v}\rangle\ \xrightarrow[n]{}\ \langle P_0,f_{u,v}\rangle=\langle\pi_{P_0}u,v\rangle. 
\]
\end{proof}

\end{document}